\title{Sequence-covering maps on submetrizable spaces 
\footnote{The work was performed as part of research conducted in the Ural Mathematical Center with the financial support of the Ministry of Science and Higher Education of the Russian Federation.}
\footnote{Keywords\textup{:} submetrizable space, sequence-covering map, 1-sequence-covering map, closed map, compact map}}
\author{Vlad Smolin \\ \small{Krasovskii Institute of Mathematics and Mechanics, Ural Federal University,} \\ \small{ Yekaterinburg, Russia} \\ \small{e-mail: SVRusl@yandex.ru}}
\theoremstyle{plain}
\newtheorem{teo}{Theorem}
\newtheorem{lemm}[teo]{Lemma}
\newtheorem{ques}[teo]{Question}
\theoremstyle{definition}
\newtheorem{nota}[teo]{Notation}
\begin{document}

\maketitle
\begin{abstract}
A topological space is called a submetrizable if it can be mapped onto a metrizable topological space by a continuous one-to-one map. In this paper we answer two questions concerning sequence-covering maps on submetrizable spaces.
\end{abstract}

\section{Introduction}
In \cite{Lin4} authors proved that if $X$ is metrizable and $f$ is a compact and sequence-covering map on $X$, then $f$ is 1-sequence-covering. Also in \cite{Lin5} it was proved that each closed sequence-covering map on a metrizable space is an 1-sequence-covering map. This results motivate questions which were posed in \cite{Lin1}:
\begin{ques}
Are the sequence-covering and compact maps on a submetrizable space 1-sequence-covering?
\end{ques}
\begin{ques}
Are the closed sequence-covering maps on a submetrizable space 1-sequence-covering?
\end{ques} 
We give a negative answer for the first question and a positive for the second. 
\section{Notation and terminology}
We use terminology from \cite{enc}. Throughout this paper all topological spaces are assumed to be Hausdorff, all maps are continuous and onto. 

\begin{nota}\label{not01}%
    Let $\langle X, \tau_X \rangle$ and $\langle Y, \tau_Y \rangle$ be a topological spaces and let $f\colon X \to Y$ be a map. Then
    \begin{itemize}
        \item $\mathsurround=0pt
 \omega$ $\coloneq$ the set of finite ordinals $=$ the set of natural numbers, so $0=\varnothing\in\omega$ and ${n}=\{0,\ldots,{n}-1\}$ for all ${n}\in\omega;$
        \item $\mathsurround=0pt
 {f}\upharpoonright A$ $\coloneq$ the restriction of the function ${f}$ to the set ${A};$
        \item $f[A] \coloneq \{f(a)\ \colon\ a \in A\}$;
        \item $\tau_X {\upharpoonright} A := \{U \cap A: U \in \tau_X\} = $ the subspace topology of $A$;
        \item ${g}\circ {f}$ is the composition of functions ${g}$ and ${f}$ (that is, ${g}$ after ${f}$); 
        \item $\mathsurround=0pt
 {}^{B}\!{A}$ $\coloneq$ the set of functions from ${B}$ to ${A}$;
         \item $S_\omega \coloneq$ the sequential fan (see \cite[Example 1.8.7]{Lin2});
        \item $\mathsf{ConvSeq}(\tau_{X}, x) \coloneq $ the set of all non-trivial sequences in $X$ that converge to $x$;
        \item $\mathsf{ConvSeq}(\tau_{X}) \coloneq \bigcup_{x \in X} \mathsf{ConvSeq}(\tau_{X}, x)$;
        \item let $q \in {}^\omega X$ and $p \in {}^\omega Y$. We say that $q$ covers $p$ if $f(q(n)) = p(n)$ for all $n \in \omega$;
        \item $f$ is a sequence-covering map if for all $p \in \mathsf{ConvSeq}(\tau_{Y})$ there is $q \in \mathsf{ConvSeq}(\tau_{X})$ such that $q$ covers $p$; 
        \item $f$ is a 1-sequence-covering map if for all $y \in Y$ there is $x \in f^{-1}(y)$ such that for all $p \in \mathsf{ConvSeq}(\tau_{Y}, y)$ there is $q \in \mathsf{ConvSeq}(\tau_{X}, x)$ such that $q$ covers $p$;
        \item suppose that $A_1$ and $A_2$ are disjoint infinite subsets of $\omega$ and $g\colon A_1 \to A_2$ is the unique order isomorphism, then $A_1 < A_2\ \colon\longleftrightarrow\ \forall n \in A_1\ [n < g(n)]$.
    \end{itemize}
\end{nota}
\begin{lemm} \label{decomp}
    There exists a decomposition $\{A_i\ \colon\ i\in\omega\}$ of $\omega$ such that $A_i < A_j$ whenever $i < j$. \hfill\qed
\end{lemm}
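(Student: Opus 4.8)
The plan is to realize the decomposition as the family of \emph{rows} of a suitable bijection of $\omega\times\omega$ onto $\omega$. I would take $\phi\colon\omega\times\omega\to\omega$ to be the Cantor pairing bijection $\phi(i,n)=\binom{i+n+1}{2}+n$ --- equivalently, the enumeration of $\omega\times\omega$ that lists first all pairs with $i+n=0$, then all pairs with $i+n=1$, then all with $i+n=2$, and so on. A one-line computation of consecutive differences gives $\phi(i+1,n)-\phi(i,n)=i+n+1>0$ and $\phi(i,n+1)-\phi(i,n)=i+n+2>0$, so $\phi$ is strictly increasing in each of its two coordinates separately, and hence strictly increasing with respect to the coordinatewise partial order on $\omega\times\omega$.

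I would then define $A_i=\{\phi(i,n)\colon n\in\omega\}$ for each $i\in\omega$. Since $\phi$ is a bijection, the sets $A_i$ are pairwise disjoint and $\bigcup_{i\in\omega}A_i=\omega$, so $\{A_i\colon i\in\omega\}$ is a decomposition of $\omega$; and each $A_i$ is infinite because $n\mapsto\phi(i,n)$ is injective. Because $n\mapsto\phi(i,n)$ is moreover strictly increasing, $\phi(i,n)$ is precisely the $n$-th element of $A_i$ in increasing order (with $\phi(i,0)=\min A_i$).

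Finally I would check that $A_i<A_j$ whenever $i<j$. Fix such $i<j$. An infinite subset of $\omega$ admits a unique order isomorphism onto any other infinite subset of $\omega$, and that isomorphism carries the $n$-th element to the $n$-th element; so the order isomorphism $g\colon A_i\to A_j$ of Notation~\ref{not01} satisfies $g(\phi(i,n))=\phi(j,n)$ for all $n\in\omega$. By the coordinatewise monotonicity of $\phi$ we have $\phi(i,n)<\phi(j,n)$ for every $n$, i.e.\ $m<g(m)$ for every $m\in A_i$, which is exactly the assertion $A_i<A_j$. I do not expect any genuine obstacle here; the only point that needs care is the index-matching in this last step --- verifying that $g$ pairs the $n$-th element of $A_i$ with the $n$-th element of $A_j$ --- since Notation~\ref{not01} phrases the relation $<$ through the order isomorphism rather than through a shared enumeration.
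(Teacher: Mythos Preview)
Your construction is correct: the Cantor pairing bijection $\phi$ is strictly increasing in each coordinate, so its rows $A_i=\{\phi(i,n):n\in\omega\}$ form a partition of $\omega$ into infinite sets with $A_i<A_j$ whenever $i<j$, and the index-matching you flag in the last step is handled exactly as you describe. The paper gives no proof of this lemma at all --- it is stated and immediately closed with a tombstone --- so there is no approach of the paper's to compare against; your argument simply supplies the explicit verification the authors leave to the reader.
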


\begin{lemm} \label{tech_lemm}
    Suppose $\{A_i\ \colon\ i\in\omega\}$ is a decomposition from Lemma \ref{decomp}, $f_{i,j}$ is the unique order isomorphism from $A_i$ onto $A_j$ for all $i,j \in \omega$, and $f_i$ is the unique order isomorphism from $A_i$ onto $\omega$ for all $i \in \omega$. If $a < b \in \omega$ and $c < d \in \omega$, then $f^{-1}_a(c) < f^{-1}_b(d)$.
\end{lemm}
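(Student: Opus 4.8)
The plan is to reformulate the hypothesis $A_i < A_j$ (for $i<j$) from Lemma~\ref{decomp} in terms of the maps $f_i$, and then to obtain the desired inequality by chaining two elementary comparisons.

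First I would record the identity $f_{i,j} = f_j^{-1} \circ f_i$ for all $i,j \in \omega$: the right-hand side is an order isomorphism from $A_i$ onto $A_j$, and an order isomorphism between two subsets of $\omega$ is unique, so it must coincide with $f_{i,j}$. Next, unwinding the definition of the relation $<$ on subsets of $\omega$, the statement $A_i < A_j$ (which holds for all $i<j$ by Lemma~\ref{decomp}) says precisely that $n < f_{i,j}(n)$ for every $n \in A_i$. Substituting $n = f_i^{-1}(k)$ and using the identity above in the form $f_{i,j}(f_i^{-1}(k)) = f_j^{-1}(k)$, this becomes the convenient reformulation: for all $i<j$ and all $k \in \omega$ one has $f_i^{-1}(k) < f_j^{-1}(k)$.

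Now assume $a<b$ and $c<d$. Applying the reformulated hypothesis with $i=a$, $j=b$, $k=c$ gives $f_a^{-1}(c) < f_b^{-1}(c)$. Since $f_b^{-1}$ is an order isomorphism from $\omega$ onto $A_b$, it is strictly increasing, so $c<d$ yields $f_b^{-1}(c) < f_b^{-1}(d)$. Combining the two, $f_a^{-1}(c) < f_b^{-1}(c) < f_b^{-1}(d)$, which is exactly the claim.

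There is no serious obstacle here; the only points that deserve a word of care are the uniqueness of order isomorphisms between subsets of $\omega$ (used to identify $f_{i,j}$ with $f_j^{-1}\circ f_i$), and the observation that the two hypotheses $a<b$ and $c<d$ feed the two distinct links of the chain: $a<b$ controls the jump from the block $A_a$ to the block $A_b$, while $c<d$ controls the position within the block $A_b$.
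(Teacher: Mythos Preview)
Your proof is correct and follows essentially the same route as the paper: the paper records the identity $f_{a,b}\circ f_a^{-1}=f_b^{-1}$ (equivalent to your $f_{i,j}=f_j^{-1}\circ f_i$) and then chains $f_a^{-1}(c) < f_{a,b}(f_a^{-1}(c)) = f_b^{-1}(c) < f_b^{-1}(d)$, exactly as you do. The only difference is presentational---the paper compresses your two paragraphs into a single line.
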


\begin{proof}
    Since $a < b$, we have $A_a < A_b$. Note that $f_{a,b}\circ f^{-1}_a = f^{-1}_b$, and so $f^{-1}_a(c) < f_{a,b}(f^{-1}_a(c)) = f^{-1}_b(c) < f^{-1}_b(d)$.
\end{proof}

\section{A theorem and an example}

\begin{teo}
    Let $f\colon \langle X, \tau_{X} \rangle\to \langle Y, \tau_{Y} \rangle$ be a closed sequence-covering map, where $\langle X, \tau_{X} \rangle$ is a submetrizable space. Then $f$ is 1-sequence-covering.
\end{teo}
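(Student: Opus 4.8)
The plan is to fix $y\in Y$, put $F:=f^{-1}(y)$, and produce one point $x^{*}\in F$ witnessing $1$-sequence-coveringness at $y$. Since $X$ is submetrizable, fix a continuous bijection $g\colon X\to M$ onto a metric space $M$ with metric $d$ and set $\rho(a,b):=d(g(a),g(b))$; then $\rho$ is a metric on $X$ whose topology is coarser than $\tau_{X}$, so in particular every ball $B_{\rho}(x,\varepsilon)$ is $\tau_{X}$-open. If $B:=\partial_{\tau_{X}}F=\varnothing$ then $F$ is clopen, $\{y\}$ is open in $Y$, no nontrivial sequence converges to $y$, and any $x^{*}\in F$ works vacuously; so assume $B\neq\varnothing$. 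A basic observation: if $p\in\mathsf{ConvSeq}(\tau_{Y},y)$ has $p(n)\neq y$ for all $n$ and $q$ is any $\tau_{X}$-convergent sequence covering $p$, then $\lim q\in F\cap\overline{X\setminus F}^{\tau_{X}}=B$; thus, modulo routine reductions that trim repeated terms and terms equal to $y$, the goal is to make $\lim q$ the same point for all such $p$.

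The first main step, which I expect to be the principal obstacle, is a Vainstein-type lemma: \emph{$B$ is $\tau_{X}$-compact}. For metrizable $X$ this is classical (boundaries of fibres of closed maps from metric spaces are compact); in the submetrizable case one must run the boundary-perturbation argument through the coarser metric $\rho$, and it is likely that the sequence-covering hypothesis is needed here as well — for instance, to turn a hypothetical infinite $\tau_{X}$-closed discrete subset of $B$ into a sequence in $Y$ converging to $y$ that admits no convergent lift, contradicting sequence-coveringness. Granting this, $\tau_{\rho}\!\upharpoonright\! B$ is a coarser Hausdorff topology on the compact Hausdorff space $\langle B,\tau_{X}\!\upharpoonright\! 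B\rangle$, hence $\tau_{\rho}\!\upharpoonright\! B=\tau_{X}\!\upharpoonright\! B$; so $\langle B,\tau_{X}\rangle$ is compact metrizable and $\rho$ induces its topology.

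Next I would establish: for $x\in B$, \emph{$x$ is a witness at $y$ iff for every $\varepsilon>0$ the set $f(B_{\rho}(x,\varepsilon))$ is a sequential neighbourhood of $y$} (it contains $y$, and every sequence converging to $y$ is eventually inside it). The forward direction is immediate, since a lift $q\to x$ is eventually inside the $\tau_{X}$-neighbourhood $B_{\rho}(x,\varepsilon)$. For the converse, fix $p\to y$ (reduced to be injective with all terms $\neq y$) and choose $q(n)\in f^{-1}(p(n))\cap B_{\rho}(x,1/j)$ for $n$ in increasing blocks, so that $q\to x$ in $\rho$. Every $\tau_{X}$-cluster point of $\operatorname{ran}q$ lies in $B$, and for any open $O$ containing the (compact, by the previous step) cluster-point set, $\{\,n:q(n)\notin O\,\}$ indexes a $\tau_{X}$-closed discrete set whose $f$-image is a $\tau_{X}$-closed, hence finite, subset of $\operatorname{ran}p$, so — $p$ being injective — it is finite. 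Therefore $K:=\overline{\operatorname{ran}q}^{\tau_{X}}$ is $\tau_{X}$-compact, whence $\tau_{X}\!\upharpoonright\! K=\rho\!\upharpoonright\! K$; since $K$ is $\rho$-closed and $q\to x$ in $\rho$, we get $x\in K$ and thus $q\to x$ in $\tau_{X}$. So $x$ is a witness.

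To finish, suppose no $x\in B$ is a witness. For each $x\in B$ pick, by the characterization, $p^{x}\in\mathsf{ConvSeq}(\tau_{Y},y)$ with all terms $\neq y$ and $\varepsilon_{x}>0$ with $f^{-1}(p^{x}(n))\cap B_{\rho}(x,\varepsilon_{x})=\varnothing$ for all $n$. The $\tau_{X}$-open balls $B_{\rho}(x,\varepsilon_{x}/2)$, $x\in B$, cover the compact $B$, so finitely many of them, coming from points $x_{1},\dots,x_{m}$ with $\varepsilon_{i}:=\varepsilon_{x_{i}}$, already cover $B$. Interleave $p^{x_{1}},\dots,p^{x_{m}}$ into a single sequence $p$; then $p\to y$ with all terms $\neq y$, so sequence-coveringness gives a $\tau_{X}$-convergent $q$ covering $p$ with $x^{*}:=\lim q\in B$. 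Fix $i_{0}$ with $x^{*}\in B_{\rho}(x_{i_{0}},\varepsilon_{i_{0}}/2)$; then $q$ is eventually inside $B_{\rho}(x^{*},\varepsilon_{i_{0}}/2)\subseteq B_{\rho}(x_{i_{0}},\varepsilon_{i_{0}})$. But infinitely many indices $k$ satisfy $p(k)=p^{x_{i_{0}}}(n)$ for some $n$, and for large such $k$ we get $q(k)\in f^{-1}(p^{x_{i_{0}}}(n))\cap B_{\rho}(x_{i_{0}},\varepsilon_{i_{0}})$, contradicting the choice of $p^{x_{i_{0}}}$. Hence some $x^{*}\in B$ is a witness; as $y$ was arbitrary, $f$ is $1$-sequence-covering. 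The delicate points, in decreasing order of expected difficulty, are the Vainstein-type compactness of $B$ and the $\tau_{X}$-versus-$\rho$ bookkeeping in the converse of the characterization.
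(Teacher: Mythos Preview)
Your outline has a genuine gap precisely where you flag it: the Vaĭnšteĭn-type claim that $B=\partial_{\tau_X}f^{-1}(y)$ is $\tau_X$-compact. The classical lemma needs a countable local base at $y$ (that is how one turns a hypothetical closed discrete $\{x_n\}\subseteq B$ into a sequence in $Y$ converging to $y$), and nothing here provides that. Your suggested rescue via sequence-covering is not convincing as stated: sequence-covering supplies lifts of sequences that \emph{already} converge in $Y$; it does not help manufacture such a sequence from $\{x_n\}$, and it certainly does not obstruct lifts. Even if you managed to show $B$ is $\tau_X$-countably compact, that alone would not yield $\tau_X$-compactness (you would still need $\tau_X\!\upharpoonright\!B$ to be Lindel\"of or metrizable, which is exactly what you are trying to conclude). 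Everything downstream --- the identification $\tau_X\!\upharpoonright\!B=\tau_\rho\!\upharpoonright\!B$, the compactness of the cluster-point set in your converse step, and the finite-subcover finish --- rests on this unproven claim.

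The paper sidesteps the problem by never asserting that $B$ is compact. For each $p\in\mathsf{ConvSeq}(\tau_Y,y)$ it sets $\mathbf{F}(p)=\{x:\text{some }\tau_X\text{-lift of }p\text{ converges to }x\}\subseteq f^{-1}(y)$ and proves three things: the family $\{\mathbf{F}(p)\}_p$ is centered (interleave two sequences); each $\mathbf{F}(p)$ is closed in the coarser metrizable topology $\sigma$ (a $\sigma$-accumulation point $x$ of $\mathbf{F}(p)$ admits a $\sigma$-lift of $p$, which is upgraded to a $\tau_X$-lift using closedness of $f$ --- exactly the mechanism in your converse step); and each $\mathbf{F}(p)$ is $\sigma$-countably compact (a $\sigma$-closed discrete $\{d_n\}\subseteq\mathbf{F}(p)$, together with lifts $q^n\to d_n$ and pairwise disjoint $\sigma$-neighbourhoods, produces a $\tau_X$-closed discrete set whose image is an infinite closed subset of $\operatorname{ran}p$, contradicting $p\to y$). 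Metrizability then turns $\sigma$-countable compactness into $\sigma$-compactness, and the centered family of compacta has nonempty intersection. The decisive difference from your plan is that compactness is established for the sets $\mathbf{F}(p)$ --- where a specific convergent sequence $p$ is already in hand to drive both the closure and the countable-compactness arguments --- rather than for the possibly larger set $B$, where no such $p$ is available.
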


\begin{proof}
    Suppose that $\sigma$ is a metrizable topology on $X$ such that $\sigma \subseteq \tau_{X}$. Fix $y \in Y$. For every sequence $p \in \mathsf{ConvSeq}(\tau_{Y}, y)$ let 
    $$
    {\bf F}(p) \coloneq \{x \in X\ \colon\ \exists q \in \mathsf{ConvSeq}(\tau_{X}, x) \text{ such that } q \text{ covers } p\}.
    $$

    Since $f$ is continuous and sequence-covering, we see that if $p \in \mathsf{ConvSeq}(\tau_{Y}, y)$, then
    $$
        \varnothing \neq {\bf F}(p) \subseteq f^{-1}(y).
    $$

    Take sequences $p_1, p_2 \in \mathsf{ConvSeq}(\tau_{Y}, y)$. Define $p_3$ as follows:
    \[
        p_3(n) \coloneq 
        \begin{cases}
        p_1(\frac{n}{2}), & \text{if } n \text{ is even;} \\
        p_2(\frac{n-1}{2}), & \text{if } n \text{ is odd.}
        \end{cases}
    \]
    Then $p_3 \in \mathsf{ConvSeq}(\tau_{Y}, y)$ and ${\bf F}(p_3) \subseteq {\bf F}(p_1) \cap {\bf F}(p_2)$, and so 
    \begin{equation} \label{centered}
        \{{\bf F}(p)\ \colon\ p \in \mathsf{ConvSeq}(\tau_{Y}, y)\} \text{ is a centered family.}
    \end{equation}

    Now we prove that 
    $$
        \forall p \in \mathsf{ConvSeq}(\tau_{Y}, y)\ [{\bf F}(p) \text{ is closed in } \langle X, \sigma \rangle]. 
    $$
    Take $p \in \mathsf{ConvSeq}(\tau_{Y}, y)$. Suppose that $x \in X$ is an accumulation point of ${\bf F}(p)$ in $\langle X, \sigma \rangle$. Let $\{O_{n}(x)\ \colon\ n\in\omega\}$ be a local base at $x$ in $\langle X, \sigma \rangle$. Since $x$ is an accumulation point of ${\bf F}(p)$ in $\langle X, \sigma \rangle$, it is easy to see that
    $$
        \forall n \in \omega\ \exists k \in \omega\ \forall i \geq k\ [O_n(x) \cap f^{-1}(p_i) \neq \varnothing].
    $$
    And so there exists a sequence $q \in \mathsf{ConvSeq}(\sigma, x)$ such that $q$ covers $p$. Let us prove that $q \in \mathsf{ConvSeq}(\tau_{X}, x)$. Assume that there exists an infinite closed in $\langle X, \tau_{X} \rangle$ subset $B$ of $\{q_n\ \colon\ n \in \omega\}$, then $f[B]$ is a closed in $\langle Y, \tau_{Y} \rangle$ and infinite subset of $\{p_n \colon n \in \omega\}$, a contradiction. Then $q \in \mathsf{ConvSeq}(\tau_{X}, x)$, and so $x \in {\bf F}(p)$.

    Now we prove that
    \begin{equation} \label{count_comp}
        \forall p\in \mathsf{ConvSeq}(\tau_{Y}, y)\ [\langle {\bf F}(p), \sigma \upharpoonright {\bf F}(p) \rangle \text{ is a countably compact space}].
    \end{equation}
    Take a sequence $p\in \mathsf{ConvSeq}(\tau_{Y}, y)$. Assume that $\langle {\bf F}(p), \sigma \upharpoonright {\bf F}(p) \rangle$ contains a countable closed discrete subset $\{d_n\ \colon\ n\in\omega\}$. Since ${\bf F}(p)$ is closed in $\langle X, \sigma \rangle$, it follows that $\{d_n\ \colon\ n\in\omega\}$ is a closed discrete subset of $\langle X, \sigma \rangle$. Consequently, there exists a discrete family $\{U_n\ \colon\ n\in\omega\}$ of open in $\langle X, \sigma \rangle$ sets such that $d_n \in U_n$. For an arbitrary $n \in \omega$, since $d_n \in {\bf F}(p)$, there exists a sequence $q^n\in \mathsf{ConvSeq}(\tau_{X}, d_n)$ such that
    $q^n$ covers $p$. It follows that there exists an increasing sequence $\langle k_n \rangle_{n \in \omega} \in {}^\omega\omega$ such that $\{q^n_i\colon i \geq k_n\} \subseteq U_n$ for all $n \in \omega$. Consequently $\{q^n_{k_n}\ \colon\ n \in \omega\}$ is a closed discrete set, and so $f[\{q^n_{k_n}\colon n \in \omega\}] = \{p_{k_n} \colon n \in \omega\}$ is a closed discrete set, a contradiction.

    Since $\langle {\bf F}(p), \sigma \upharpoonright {\bf F}(p) \rangle$ is metrizable, from (\ref{count_comp}) it follows that
    \begin{equation} \label{comp}
        \forall p\in \mathsf{ConvSeq}(\tau_{Y}, y)\ [ \langle {\bf F}(p), \sigma \upharpoonright {\bf F}(p) \rangle \text{ is a compact space}].
    \end{equation}
    And so from (\ref{centered}) and (\ref{comp}) it follows that
    \begin{equation} \label{end}
        \bigcap \{{\bf F}(p)\ \colon\ p \in \mathsf{ConvSeq}(\tau_{Y}, y)\} \neq \varnothing.
    \end{equation}
    Since $y$ was arbitrary, from (\ref{end}) it follows that $f$ is 1-sequence-covering.  
\end{proof}

\begin{teo}
    There exist a countable submetrizable space $\langle X, \sigma \rangle$ and a compact sequence-covering map $f\colon \langle X, \sigma \rangle \to S_\omega$ such that $f$ is not 1-sequence-covering.
\end{teo}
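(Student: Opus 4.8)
\emph{Proof plan.} I would realise $S_\omega$ (up to homeomorphism) on the set $(\omega\times\omega)\cup\{\infty\}$, with every $(k,m)$ isolated, with the $k$-th spoke $L_k=\{(k,m):m\in\omega\}$ a nontrivial sequence converging to $\infty$, and with $U\ni\infty$ open iff $U$ meets each $L_k$ in a cofinite set. The only fact about $S_\omega$ needed is that a nontrivial sequence $p$ converges to $\infty$ iff it is eventually supported on finitely many spokes and tends to $\infty$ along each of them; in particular $p$ then has a largest spoke index $k^\ast(p)$ that it meets infinitely often. Over this I would build $\langle X,\sigma\rangle$ as a ``fan of finite fans'': put $X=\{c_{k,m,i}:k,m\in\omega,\ 0\le i\le m\}\cup\{a_l:l\in\omega\}\cup\{a_\infty\}$, let $f(c_{k,m,i})=(k,m)$ and $f(a_l)=f(a_\infty)=\infty$, declare every $c_{k,m,i}$ isolated, give $a_l$ the neighbourhood base $\{a_l\}\cup\{c_{k,m,l}:k<l,\ m\ge N\}$ (for $N\ge l$), and give $a_\infty$ the neighbourhood base $\{a_\infty\}\cup\{a_l:l\ge N\}\cup\{c_{k,m,l}:k<l\le m,\ l\ge N,\ m\ge g(l)\}$ (for $N\in\omega$, $g\in{}^\omega\omega$). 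Then $X$ is countable, and readily $T_1$ and regular (the displayed basic open sets are in fact clopen); every fibre of $f$ is compact, since $f^{-1}((k,m))$ is finite and $\langle f^{-1}(\infty),\sigma\upharpoonright f^{-1}(\infty)\rangle$ is the convergent sequence $a_l\to a_\infty$; and $f$ is continuous and onto. I would remark that the fibres over isolated points must be finite of \emph{unbounded} size: with singleton fibres the forced lifts of two distinct spokes cannot be amalgamated into a convergent lift of an interleaving sequence, while with uniformly bounded fibres each ${\bf F}(L_k)$ is finite, the family $\{{\bf F}(L_k):k\in\omega\}$ is centered (as in the proof of Theorem~1), hence has a common point, and $f$ is $1$-sequence-covering.

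For sequence-covering, given $p\in\mathsf{ConvSeq}(\tau_{S_\omega},\infty)$ and $l:=k^\ast(p)+1$, I would define a cover $q$ of $p$ taking the value $c_{k,t,l}$ whenever $p(n)=(k,t)$ with $k\le k^\ast(p)$ and $t\ge l$ (all but finitely many of the coordinates where $p(n)\ne\infty$), the value $a_l$ whenever $p(n)=\infty$, and any value of $f^{-1}(p(n))$ on the finitely many remaining coordinates. Because $p$ tends to $\infty$ along each of the finitely many spokes it meets infinitely often, every basic neighbourhood of $a_l$ contains $q(n)$ for all large $n$; hence $q\in\mathsf{ConvSeq}(\tau_X,a_l)$ and $a_l\in{\bf F}(p)$, so $f$ is sequence-covering.

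For the failure of $1$-sequence-covering I would show that no $x\in f^{-1}(\infty)$ can serve as the distinguished preimage of $\infty$. If $x=a_l$, then $p=L_l$ has no cover converging to $a_l$: any cover $q$ of $L_l$ has $q(n)\in f^{-1}((l,n))=\{c_{l,n,i}:i\le n\}$, while every basic neighbourhood of $a_l$ contains only points $c_{k,m,l}$ with $k<l$, so no term of $q$ lies in any basic neighbourhood of $a_l$. If $x=a_\infty$, I would show more, namely that no sequence of $c$-points converges to $a_\infty$ at all: if $c_{k_n,m_n,i_n}\to a_\infty$, then testing against the basic neighbourhoods of $a_\infty$ forces $i_n\to\infty$ and $m_n\ge g(i_n)$ for \emph{every} $g\in{}^\omega\omega$, whereas $g(l):=\max\!\bigl(l,\,1+\max\{m_n:i_n\le l\}\bigr)$ is well defined (a finite maximum, since $i_n\to\infty$) and yields $m_n<g(i_n)$ for all $n$, a contradiction. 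Since every cover of every $p\in\mathsf{ConvSeq}(\tau_{S_\omega},\infty)$ is eventually a sequence of $c$-points, $a_\infty\notin{\bf F}(p)$ for all such $p$. Thus $f$ is a compact, sequence-covering map onto $S_\omega$ that is not $1$-sequence-covering.

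It remains to prove $X$ submetrizable, and I would do this by coarsening only at $a_\infty$: let $\mu$ be the topology generated by the singletons $\{c_{k,m,i}\}$, the neighbourhood bases above at the $a_l$, and the sets $B_n:=\{a_\infty\}\cup\{a_l:l\ge n\}\cup\{c_{k,m,l}:k<l\le m,\ l\ge n\}$ for $n\in\omega$. Each generator is $\sigma$-open (indeed $B_n$ is itself a basic $\sigma$-neighbourhood of $a_\infty$, taking $g=\mathrm{id}$), so $\mu\subseteq\sigma$; $\mu$ is second countable by construction; and a direct check shows every listed basic set is $\mu$-clopen, so $\mu$ is a zero-dimensional, $T_1$, second countable topology, hence metrizable. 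The one genuine obstacle in all of this is the design of $\sigma$ itself — arranging that compactness of fibres, sequence-covering, failure of $1$-sequence-covering, and submetrizability hold simultaneously; once $X$ is fixed as above the four verifications are essentially bookkeeping, and I expect Lemmas \ref{decomp} and \ref{tech_lemm} to be exactly the device that re-indexes this ``fan of fans'' so that the fibres over the isolated points can be taken of one fixed shape rather than the ad hoc sizes $\{0,\dots,m\}$ used here.
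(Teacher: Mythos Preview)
Your construction is correct and complete: the space $\langle X,\sigma\rangle$ is countable, zero-dimensional $T_1$ (hence regular), the fibres are compact, continuity, sequence-covering, and the failure of $1$-sequence-covering all check out as you describe, and the coarsening $\mu$ is a genuine second-countable zero-dimensional $T_1$ topology below $\sigma$, so $X$ is submetrizable.

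Your route differs from the paper's in the implementation, though the skeleton is the same. In both proofs the fibre $f^{-1}(\infty)$ is a convergent sequence $a_0,a_1,\ldots\to a_\infty$, the point $a_l$ can receive lifts only of sequences living on spokes with index $<l$, and $a_\infty$ receives none; this is exactly what forces sequence-covering to hold while $1$-sequence-covering fails. The paper realises this picture inside $(\omega+1)^2$: it refines the product topology by the sets $((\omega+1)\times\{\omega\})\cup\mathbf{L}(g)$, then deletes an initial segment from each row so that the row indexed by $k\in A_i$ maps to the $i$-th spoke and meets only the columns $\ge i$; the decomposition $\{A_i\}$ of Lemma~\ref{decomp} and the monotonicity of Lemma~\ref{tech_lemm} are precisely the bookkeeping that makes continuity go through. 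In particular, in the paper the fibres over isolated points are \emph{infinite} (each is a tail of $\omega+1$), whereas yours are finite of size $m+1$. Your version is arguably more elementary---no auxiliary decomposition, no order isomorphisms---while the paper's has the aesthetic advantage of a uniform ambient space and fibres of a single shape; your closing remark about Lemmas~\ref{decomp}--\ref{tech_lemm} is exactly right. One small caveat: your motivational aside that uniformly bounded fibres would force each $\mathbf{F}(L_k)$ to be finite and hence force $1$-sequence-covering is a heuristic for \emph{this style} of construction rather than a general theorem (nothing prevents $f^{-1}(\infty)$ from being infinite even when the other fibres are bounded), but it plays no role in the proof proper.
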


\begin{proof}
    For every $n, m \in \omega$ and $g\in{}^\omega\omega$, let
    $$
        {\bf U}(n, m) \coloneq \{\langle n, k\rangle\ \colon\  k \geq m\};
    $$
    $$
        {\bf L}(g) \coloneq \bigcup_{n \in \omega} {\bf U}(n, g(n)).
    $$
    We define $S_\omega$ in a convenient form for us (for details see \cite[Example 1.8.7]{Lin2}). Let $X \coloneq \{0\} \cup \omega^2$ and let $\tau_{S_\omega}$ be the topology on $X$ generated by the base $\{\{x\}\ \colon\ x \in \omega^2\} \cup \{\{0\} \cup {\bf L}(g)\ \colon\ g\in{}^\omega\omega\}$, then $S_\omega \coloneq \langle X, \tau_{S_\omega} \rangle$.
    
    Now we turn to the definition of $\langle X, \sigma \rangle$. Let $\tau$ be the topology on $(\omega + 1)^2$ generated by the subbase 
    $$
        \tau_{(\omega + 1)^2} \cup \{\big((\omega+1)\times\{\omega\}\big) \cup{\bf L}(g)\ \colon\ g\in{}^\omega\omega\}.
    $$

    Let $\{A_i\ \colon\ i\in\omega\}$ be a decomposition of $\omega$ from Lemma \ref{decomp}, for every $k \in \omega$ denote by $\xi(k)$ the natural number $i$ such that $k \in A_i$. Also for every $i \neq j\in \omega$ by $f_i$ denote the unique order isomorphism from $A_i$ onto $\omega$ and by $f_{i, j}$ denote the unique order isomorphism from $A_i$ onto $A_j$.

    Define map $f^*\colon (\omega+1)^2 \to S_\omega$ by the following rules:
    $$
        f^*[(\omega+1)\times\omega] \coloneq 0;
    $$
    $$
        f^*[(\omega+1)\times\{k\}] \coloneq \langle \xi(k), f_{\xi(k)}(k) \rangle\ \forall k \in \omega.
    $$
    Now consider the subset of $(\omega + 1)^2$
    $$
        X \coloneq (\omega+1)^2 \setminus \bigcup_{i \in \omega}( i\times A_i). 
    $$
    Let $\sigma \coloneq \tau\upharpoonright X$. It is obvious that $\langle X, \sigma \rangle$ is submetrizable and countable. Let $f \coloneq (f^*\upharpoonright X) \colon X \to S_\omega$. We prove that $f \colon \langle X, \sigma\rangle \to S_\omega$ is continuous compact and sequence-covering but not 1-sequence-covering
    
    It is obvious that $f$ is compact. Let us prove that $f$ is continuous at every point $x \in X$. The only nontrivial case is $x \in \big((\omega+1)\times\{\omega\}\big)$. Let $g$ be an increasing function from $\omega$ to $\omega$. Then $\{0\} \cup {\bf L}(g)$ is a base neighbourhood of $0 = f(x)$. Define $g\prime\colon \omega \to \omega$ by the following rule:
    $$
        g\prime(n) \coloneq f^{-1}_n(g(n))\ \forall n \in \omega.
    $$
    Since $g$ is increasing, from Lemma \ref{tech_lemm} it follows that $g\prime$ is increasing. We prove that
    $$
        f[({\bf L}(g\prime))\cap X] \subseteq \{0\}\cup{\bf L}(g).
    $$
    Take $z \in {\bf L}(g\prime)\cap X$. If $z(2) = \omega$, then $f(z)=0$. Suppose that $x=\langle n,k \rangle$, where $n,k\in\omega$. Then $f(x) = \langle \xi(k), f_{\xi(k)}(k) \rangle$. From the definition of $X$ it follows that $n \geq \xi(k)$. Since $g\prime$ is increasing, we have 
    $$k \geq g\prime(n)\geq g\prime(\xi(k)) = f^{-1}_{\xi(k)}(g(\xi(k))),
    $$ 
    and so $f_{\xi(k)}(k) \geq g(\xi(k))$. Hence $\langle \xi(k), f_{\xi(k)}(k) \rangle \in {\bf U}(\xi(k), g(\xi(k))) \subseteq {\bf L}(g)$.

    Now we prove that $f$ is sequence-covering. Take $p = \big\langle\langle y^{k}_1, y^{k}_2\rangle \big\rangle_{k\in\omega}$ such that $p \in \mathsf{ConvSeq}(\tau_{S_\omega}, 0)$. Let $n \coloneq \mathsf{max}\{y^{k}_1\ \colon\ k\in\omega\}$. For every $k \in \omega$ we have 
    $$
        f^{-1}(\langle y^{k}_1, y^{k}_2\rangle) = \big((\omega+1)\times \{j_k\}\big)\setminus (\xi(j_k)\times A_{\xi(j_k)}) \text{, where } \xi(j_k) = y^{k}_1 \leq n.
    $$
    Hence
    $$
        \forall k\in \omega\ \exists x_k \in f^{-1}(\langle y^{k}_1, y^{k}_2\rangle)\ [x_k(1)=n].
    $$
    And so $\langle x_k \rangle_{k\in\omega} \in \mathsf{ConvSeq}(\sigma, \langle n, \omega \rangle)$ and $f(x_k) = \langle y^{k}_1, y^{k}_2\rangle$ for all $k \in \omega$.

    Now we prove that 
    \begin{gather*}
        \forall y \in (\omega + 1)\times\{\omega\}\ \exists p \in \mathsf{ConvSeq}(\tau_{S_\omega}, 0) \text{ such that}\\ \forall q \in \mathsf{ConvSeq}(\sigma, y)\ [ q \text{ does not cover } p].
    \end{gather*}
    Take $y \in (\omega + 1)\times\{\omega\}$. We need to consider two cases. If $y = \langle \omega, \omega \rangle$, then it is easy to see that $\forall q \in \mathsf{ConvSeq}(\sigma, y)\ [q \text{ is eventually in } (\omega + 1)\times\{\omega\}]$. Now suppose that $y = \langle n, \omega\rangle$, where $n \in \omega$. Consider sequence $\big\langle \langle n+1, k\rangle \big\rangle_{k\in\omega} \in \mathsf{ConvSeq}(\tau_{S_\omega}, 0)$. For every $k \in \omega$ we have 
    $$
        f^{-1}(\langle n+1, k\rangle) = \big((\omega+1)\times \{j_k\}\big)\setminus (\xi(j_k)\times A_{\xi(j_k)}) \text{, where } \xi(j_k) = n+1.
    $$
    Consequently, for every sequence $\langle x_k \rangle_{k\in \omega}$ such that $f(x_k)=\langle n+1, k\rangle$, we have $x_k(1) > n$, and so $\langle x_k \rangle_{k\in \omega} \nin \mathsf{ConvSeq}(\sigma, y)$. And so, it follows that $f$ is not 1-sequence-covering. 
\end{proof}

{\bf Acknowledgement} The author would like to thank Anton Lipin for the help with the simplification of the original proof.

\bigskip

\end{document}